\newcommand{\Z}{\mathbb Z}
\newcommand{\F}{\mathbb{F}}
\DeclareMathOperator{\im}{im}
\DeclareMathOperator{\rank}{rank}
\theoremstyle{plain} %theorems after this line but before the definition
\newtheorem{theorem}{Theorem}[section]
\newtheorem{proposition}[theorem]{Proposition} %the optional argument
\newtheorem{lemma}[theorem]{Lemma}
\theoremstyle{definition} %theorems after this line will all have the same
\newtheorem{definition}[theorem]{Definition} 
\newtheorem{notation}[theorem]{Notation} 
\newtheorem*{remark*}{Remark}
\title{On Lengths of $\mathbb{F}_2[x,y,z]/(x^{d_1}, y^{d_2},z^{d_3}, x+y+z)$}
\author{Han}
\address{University of Michigan} 
\email{fionahan@umich.edu}
\author{Kenkel}
\address{Grinnell College}
\email{kenkeljennifer@grinnell.edu}
\author{Li}
\address{University of Michigan} 
\email{dejl@umich.edu}
\author{Venkatesh}
\address{University of Michigan}
\email{srivenk@umich.edu}
\author{Wiles}
\address{University of Michigan} 
\email{ashwiles@umich.edu}
\date{\today}
\begin{document}

\begin{abstract} 
    In this paper, we provide a formula for the vector space dimension of the ring $\mathbb{F}_2[x,y,z]/(x^{d_1}, y^{d_2},z^{d_3}, x+y+z)$ over $\mathbb{F}_2$ when $d_1,d_2,d_3$ all lie between successive powers of $2$. This  recovers a result of Han in \cite{HAN92}, with a different proof. For general $d_1,d_2,d_3$, we provide a simple algorithm to calculate the vector space dimension of $\mathbb{F}_2[x,y,z]/(x^{d_1}, y^{d_2},z^{d_3}, x+y+z)$.
\end{abstract}

\maketitle

\newcommand{\J}{\mathcal{J}}
\newcommand{\R}{\mathbb{R}}
\newcommand{\ds}{\displaystyle}

\section{Introduction}
Let $R= \mathbb{F}[x_1, \dots, x_n]$ be the polynomial ring over a field $\F$ and let $I$ be an ideal such that $R/I$ is artinian, and thus a finite-dimensional vector space over $\F$. A natural question is to ask for the vector space dimension of $R/I$, both in total, and in each graded component.
%Perhaps the simplest family of ideals $I$ such that $R/I$ is artinian is when $I=(x_1^{d_1}, \dots, x_n^{d_n})$ for some non-negative integers $d_1, \dots, d_n$; the vector space dimensions of $R/I$ when $I$ is of this form were studied in \cite{RRR91}.
Perhaps the simplest example is $R/(x_1^{d_1}, \dots, x_n^{d_n})$ for some non-negative integers $d_1, \dots, d_n$; its total vector space dimension is just $\prod_{i=1}^n d_i$ while the vector space dimension in each graded component was studied in \cite{RRR91}.
Adding in only one ideal generator complicates the problem; the vector space dimensions of $R/I$ when $I=(x_1^{d_1}, \dots, x_n^{d_n}, x_1+\dots+x_n)$  are remarkably more complicated. Properties of this ring have been studied in \cite{HAN92,HM93,VRA15,syzOfIdeal,ResOfIdeal}, for example. 
In this paper, we study this ring in the special case $\mathbb{F}= \mathbb{F}_2 := \Z/2\Z$ and $n=3$.

%From this point on, we will denote $\Z/2\Z$ as $\mathbb{F}_2$. 

%The ring $\mathbb{F}[x_1, \dots, x_n]/(x_1^{d_1}, \dots, x_n^{d_n}, x_1+\dots+x_n)$ has been studied in various contexts. 
For a graded ring $S$, let $S_D$ denote the degree $D$ component. In the case that $\mathbb{F}$ is a field of characteristic 0, it was shown by Stanley \cite[Theorem 2.4]{WeylGroups} that the element $x+y+z + J$ has the Strong Lefschetz Property in the ring $\mathbb{F}[x,y,z]/(x^{d_1}, y^{d_2}, z^{d_3})$ meaning that the map 
$$\mathbb{F}[x,y,z]/(x^{d_1}, y^{d_2}, z^{d_3})_D \xrightarrow[]{x+y+z+J} \mathbb{F}[x,y,z]/(x^{d_1}, y^{d_2}, z^{d_3})_{D+1} $$
has maximal possible rank, where $J = (x^{d_1}, y^{d_2}, z^{d_3})$. In other words, multiplication by $x+y+z+J$ is either injective or surjective, depending on the vector space dimension of $\mathbb{F}[x,y,z]/(x^{d_1}, y^{d_2}, z^{d_3})$ in degrees $D$ and $D+1$. One could use this information to calculate $\dim_{\F}(R/(x^{d_1}, y^{d_2}, z^{d_3}, x+y+z))_D$.    
\par 
 
%and let $I$ be some $\mathfrak{m}$-primary ideal. Then $R/I$ is a finite dimensional $k$-vector space. A natural question is to determine what that finite dimension is. The total vector space dimension can be determined by totaling the vector space dimensions of each graded component. 
%For any graded ring $S$, we let $S_d$ denote the degree $d$ component. Let $R=k[x,y,z]$ and $I = (x^{d_1}, y^{d_2}, z^{d_3}, x+y+z)$ for some $d_1, d_2, d_3$ arbitrary natural numbers. 

\noindent However, the element $x+y+z+J$ does not have the Strong Lefschetz Property when $\mathbb{F}$ is a field of positive characteristic. As an example, consider $\mathbb{F}=\mathbb{F}_2$ and $d_1=d_2=d_3=2$ and $J=(x^2,y^2,z^2)$. Then $(\mathbb{F}_2[x,y,z]/((x^{2}, y^{2}, z^{2}))_1$ has vector space basis $\{x,y,z\}$ and thus is dimension 3, and $\mathbb{F}_2[x,y,z]/((x^{2}, y^{2}, z^{2})_2$ has vector space basis $\{xy, xz, yz\}$ and also has dimension 3. Yet multiplication by $x+y+z+J$ is not injective; note that $x+y+z$ is not in the ideal $(x^{2}, y^{2}, z^{2})$, yet $$(x+y+z)\cdot(x+y+z) = x^2+y^2+z^2,$$ which is in the ideal $(x^{2}, y^{2}, z^{2})$. Thus $(x+y+z+J) \cdot (x+y+z+J)= 0+J$ in the quotient ring $\mathbb{F}[x,y,z]/(x^{2}, y^{2}, z^{2})$, so multiplication by $x+y+z+J$ is not injective. 
 
\begin{notation}
    We write $\dim_\mathbb{F}(R/I)$ to denote the vector space dimension of $R/I$ over the field $\F$. Additionally, for convenience of notation, let
    \begin{align*}
        \dim_{\F}(d_1, d_2, d_3) & := \dim_{\F} \F[x, y, z]/(x^{d_1}, y^{d_2}, z^{d_3}, x+y+z).
    \end{align*}
\end{notation}
In \cite[Chapter 2]{HAN92}, it was shown that, when $\F$ is a field of positive characteristic, there exists a half-integer $[d_1,d_2,d_3]_{\F}$ such that
    $$
        \dim_\mathbb{F}(d_1,d_2,d_3) =
        \frac{2d_1d_2 + 2d_1d_3 + 2d_2d_3 - d_1^2 - d_2^2 - d_3^2}{4} + ([d_1,d_2,d_3]_\mathbb{F})^2.
    $$
Furthermore, the value of $[d_1,d_2,d_3]_\mathbb{F}$ is such that the generators of the colon ideal (see Definition \ref{colonideal} for the definition of a colon ideal)  $J=(x^{d_1}, y^{d_2}: (x+y)^{d_3})$ have degrees $\frac{d_1+d_2-d_3}{2} - [d_1,d_2,d_3]_{\F}$ and $\frac{d_1+d_2-d_3}{2} + [d_1,d_2,d_3]_{\F}$. The value of these ideal generators is not trivial to calculate; in \cite[Proposition 2.5]{HAN92}, she gives a relationship between $[d_1, d_2,d_3]_\mathbb{F}$ and a certain matrix of binomial coefficients. In \cite[Theorem 2.23]{HAN92}, she also gives a way of determining $[d_1,d_2,d_3]_{\F}$ depending on where the point $(d_1, d_2,d_3)$ lies in a geometric honeycombing of $\R^3$ with octahedra and tetrahedra. 

%Since we restrict our attention to $p=2$, we are able to be more explicit. 

% \begin{notation}
%     For convenience of notation, let
% \begin{align*}
%     \dim_{\mathbb{F}_2}(d_1, d_2, d_3) & := \dim_{\mathbb{F}_2}[x, y, z]/(x^{d_1}, y^{d_2}, (x+y)^{d_3})
% \end{align*}
% \end{notation}
% \DL{do we need this notation when we already have notation 1.1? pointing this out because the spacing is wacky (the table is not short enough to fit on this page)}

\noindent The goal of this paper is to obtain explicit formulas for $\dim_{\mathbb{F}}(d_1,d_2,d_3)$ when $\mathbb{F} = \mathbb{F}_2$. In the case that $d_1,d_2 \leq 2^e \leq d_3$, $\dim_{\mathbb{F}_2}(d_1,d_2,d_3)$ was given in \cite[Proposition 1.6(i)]{HAN92}, and the case that $d_1 \leq 2^e \leq d_2,d_3$, $\dim_{\mathbb{F}_2}(d_1, d_2, d_3)$ was described in \cite[Proposition 1.6(ii)]{HAN92} in terms of the dimension of a smaller vector space. The only remaining case is when $2^e \leq d_1 \leq d_2 \leq d_3 \leq 2^{e+1}$, the case in which all three powers are between two consecutive powers of 2; this situation is covered by \cite[Theorem 3.8]{HAN92}. 

Our main result is an alternate proof of the following formula for the dimension in the case that $2^e \leq d_1 \leq d_2 \leq d_3 \leq 2^{e+1}$.

\begin{restatable}{theorem}{majorThm} 
\label{theorem:formula-for-dimension}
    Let $q$ be a power of $2$ and let $\frac{q}{2}< d_1, d_2, d_3 \leq q$. Then:
    \begin{equation*} \label{equation:formula-for-dimension}
        \dim_{\mathbb{F}_2} \frac{\mathbb{F}_2[x,y,z]}{(x^{d_1},y^{d_2},z^{d_3},x+y+z)} = d_1d_2 + d_2d_3 + d_1d_3 -q(d_1+d_2+d_3) +q^2.
    \end{equation*}
\end{restatable}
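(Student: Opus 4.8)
The plan is to reduce to two variables and then prove the formula by a downward induction on $d_3$. Since $\mathbb{F}_2$ has characteristic $2$, the substitution $z \mapsto x+y$ identifies $\mathbb{F}_2[x,y,z]/(x+y+z)$ with $\mathbb{F}_2[x,y]$, so, writing $A_c := \mathbb{F}_2[x,y]/(x^{d_1},y^{d_2},(x+y)^c)$, the statement to prove is
\[
\dim_{\mathbb{F}_2} A_{d_3} = d_1d_2 + d_2d_3 + d_1d_3 - q(d_1+d_2+d_3) + q^2 = d_1d_2 - (q-d_3)(d_1+d_2-q),
\]
the second equality being a routine rearrangement. When $d_3 = q$ the hypothesis $d_1,d_2 \le q$ gives $(x+y)^q = x^q+y^q = x^{q-d_1}x^{d_1} + y^{q-d_2}y^{d_2} \in (x^{d_1},y^{d_2})$, so $A_q = \mathbb{F}_2[x,y]/(x^{d_1},y^{d_2})$ has dimension $d_1d_2$ and the formula holds.

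For the inductive step, fix $q/2 < c \le q-1$ and note that $A_c$ is the quotient of $A_{c+1}$ by the ideal $N_c := (x+y)^{c} A_{c+1}$, and that $N_c$ is annihilated by $x+y$ inside $A_{c+1}$; hence $N_c$ is a cyclic module over $A_{c+1}/(x+y) \cong \mathbb{F}_2[x]/(x^{\min(d_1,d_2)})$ (setting $y=x$), so $N_c \cong \mathbb{F}_2[x]/(x^{t(c)})$ for a unique integer $t(c)$, and $\dim_{\mathbb{F}_2} A_c = \dim_{\mathbb{F}_2} A_{c+1} - t(c)$. Iterating down from $A_q$, the theorem is therefore equivalent to the assertion that $t(c) = d_1 + d_2 - q$ for every $q/2 < c \le q-1$; unwinding the definition of $t(c)$, this asserts that
\[
x^{\,d_1+d_2-q}(x+y)^{c} \in (x^{d_1},y^{d_2},(x+y)^{c+1}) \quad\text{but}\quad x^{\,d_1+d_2-q-1}(x+y)^{c} \notin (x^{d_1},y^{d_2},(x+y)^{c+1}).
\]

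The non-containment half is the easy one. From $(x+y)\cdot(x^{d_1},y^{d_2},(x+y)^{c+1}) \subseteq (x^{d_1},y^{d_2},(x+y)^{c+2})$ one sees that $\{\,i : x^i(x+y)^c \in (x^{d_1},y^{d_2},(x+y)^{c+1})\,\}$ is contained in $\{\,i : x^i(x+y)^{c+1} \in (x^{d_1},y^{d_2},(x+y)^{c+2})\,\}$, so $t(c) \ge t(c+1)$; and for $c = q-1$ one computes $t(q-1) = d_1+d_2-q$ directly by expanding $(x+y)^{q-1} = \sum_{k=0}^{q-1} x^k y^{q-1-k}$ (all binomial coefficients equal $1$ modulo $2$) and determining for which $i$ every monomial of $x^i(x+y)^{q-1}$ lies in $(x^{d_1},y^{d_2})$. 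Chaining these inequalities gives $t(c) \ge d_1+d_2-q$ for all $c \le q-1$. What remains, and is the heart of the matter, is the containment $x^{\,d_1+d_2-q}(x+y)^{c} \in (x^{d_1},y^{d_2},(x+y)^{c+1})$: after reducing $x^{\,d_1+d_2-q}(x+y)^{c}$ modulo $(x^{d_1},y^{d_2})$, only the monomials $x^{\,d_1+d_2-q+k}y^{\,c-k}$ with $\binom{c}{k}$ odd and $c-d_2 < k < q-d_2$ survive, and one must show that this surviving combination lies in the ideal generated by $(x+y)^{c+1}$ modulo $(x^{d_1},y^{d_2})$.

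This last containment is the main obstacle, and it is essentially a statement about carrying in base $2$. I expect to prove it using the Lucas-theorem form of the powers involved — for $q/2 < n \le q$ one has $(x+y)^{n} = (x^{q/2}+y^{q/2})(x+y)^{\,n-q/2}$ with $n-q/2 < q/2$ — together with an induction that strips off the leading binary digit of $c$, tracking how the surviving boundary monomials recombine into a multiple of $(x+y)^{c+1}$. A cleaner alternative would bypass the cyclic-module analysis entirely: by \cite[Proposition 2.5]{HAN92} the half-integer $[d_1,d_2,d_3]_{\mathbb{F}_2}$ — and hence, through the formula of \cite[Chapter 2]{HAN92} recalled above, the dimension — is governed by the rank of a matrix of binomial coefficients $\binom{d_3}{\,\cdot\,}$, which over $\mathbb{F}_2$ is an explicit $0/1$ matrix; the hypothesis $q/2 < d_1,d_2,d_3 \le q$ forces this matrix into a block/self-similar shape whose rank can be read off, with the rank deficiency again accounted for by the single relation $(x+y)^{q-d_3}(x+y)^{d_3} = x^q+y^q \in (x^{d_1},y^{d_2})$. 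In either approach the genuine content is the base-$2$ arithmetic packaged in Lucas' theorem.
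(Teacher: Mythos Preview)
Your overall architecture matches the paper's: reduce to two variables, run a downward induction on $d_3$ from the base case $d_3=q$, and show that each successive quotient $J_c/J_{c+1}$ (your $N_c$) has $\mathbb{F}_2$-dimension exactly $d_1+d_2-q$. Your treatment of the lower bound $t(c)\ge d_1+d_2-q$ via the monotonicity $t(c)\ge t(c+1)$ together with the explicit computation of $t(q-1)$ is correct, and it is genuinely different from the paper's route: the paper instead proves directly that $\{x^i(x+y)^c : 0\le i\le d_1+d_2-q-1\}$ is linearly independent in $J_c/J_{c+1}$ by multiplying a hypothetical relation through by $(x+y)^{q-1-c}$ and locating a surviving monomial $x^{d_1-1}y^{\,q+a-d_1}$ that cannot lie in $(x^{d_1},y^{d_2})$. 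Your argument for this direction is shorter.

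The genuine gap is the other inequality, which you call the hard half and do not prove. In fact it is the \emph{easy} half, and the two observations you already wrote down finish it immediately. You noted that $x+y$ annihilates $N_c$, so $x$ and $y$ act identically on $N_c$; hence inside $N_c$
\[
x^{\,d_1+d_2-q}(x+y)^c \;=\; x^{\,d_1-q/2}\,y^{\,d_2-q/2}\,(x+y)^c.
\]
You also noted that $(x+y)^c=(x^{q/2}+y^{q/2})(x+y)^{\,c-q/2}$ for $c>q/2$. Substituting gives
\[
x^{\,d_1-q/2}\,y^{\,d_2-q/2}\,(x^{q/2}+y^{q/2})(x+y)^{\,c-q/2}
=\bigl(x^{d_1}y^{\,d_2-q/2}+x^{\,d_1-q/2}y^{d_2}\bigr)(x+y)^{\,c-q/2}\in(x^{d_1},y^{d_2}),
\]
so $t(c)\le d_1+d_2-q$. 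This is exactly the paper's spanning argument; no Lucas-style monomial bookkeeping, no induction on binary digits, and no appeal to the binomial-coefficient matrix of \cite{HAN92} is needed.
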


To see how Theorem \ref{theorem:formula-for-dimension} follows from \cite[Theorem 3.8]{HAN92}, suppose that $q$ is a power of $2$ and that $\frac{q}{2}< d_1, d_2, d_3 \leq q$. Then $q-d_i< q-\frac{q}{2} = \frac{q}{2}$. Therefore, by \ref{corollary:han-result-power-of-2}, $\dim_{F_2}(d_1,q-d_2,q-d_3) = (q-d_2)(q-d_3)$. Thus \begin{align*} 
\dim_{\mathbb{F}_2}(d_1,d_2,d_3)&=	\dim_{\mathbb{F}_2}(d_1,q-d_2,q-d_3)+(d_2+d_3-q)d_1\\  &= (q-d_2)(q-d_3) + (d_2+d_3-q)d_1 \\ 
	&= q^2- qd_2 -qd_3 + d_2d_3 + d_1d_2+d_1d_3-q d_1 \\ 
	&= d_1d_2+d_2d_3+d_1d_3 - q(d_1+d_2+d_3) + q^2
	\end{align*} 
as desired.

We will now say a few words on how we came up with the formula. Using Macaulay2 (\cite{M2}), we first restricted our attention to the case when $d_2 = d_3$. To seek a pattern, we compared $\dim_{\mathbb{F}_2}(d_1, d_2, d_2)$ to $\dim_{\mathbb{F}_2}(d_1, d_2-1, d_2-1)$ and looked at values of $d_1$ ranging from 1 to 128, and $d_2$ ranging from 1 to 64. Part of the data is shown in \Cref{fig:macaulay2-data}.
\begin{figure}[ht!]
    \centering
    \captionsetup{justification=centering}
    \includegraphics[scale=0.5]{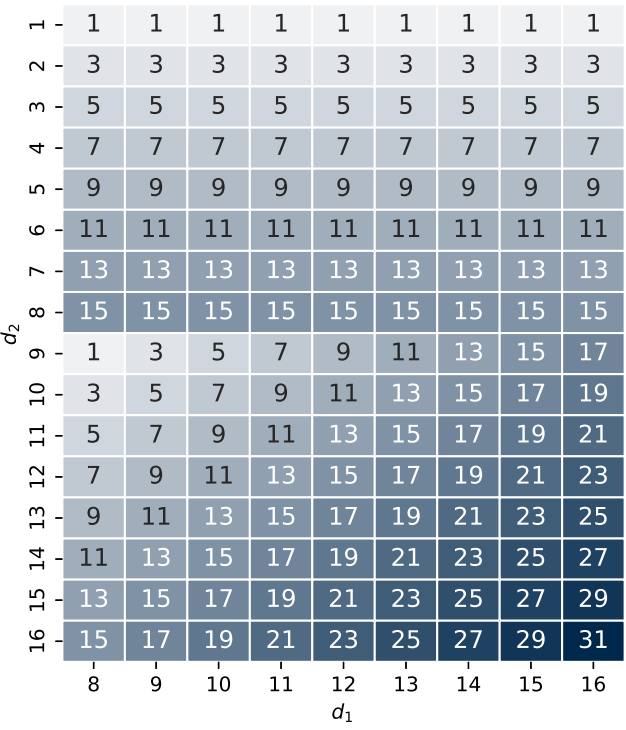}
    \vspace{-0.5\baselineskip}
    \caption{Column $i$, row $j$ stores the value: $\dim_{\mathbb{F}_2}(i, j, j) - \dim_{\mathbb{F}_2}(i, j-1, j-1)$} \label{fig:macaulay2-data}
\end{figure}
%\vspace{-10pt}

\noindent Notice that the value in each row in \Cref{fig:macaulay2-data} increases by 2 up until the row is a power of 2. Furthermore, for a fixed column, the data repeats after the row is larger than some power of 2. It follows that to find $\dim_{\mathbb{F}_2}(i,j,j)$, we could add up every entry in column $i$ (note also that $\dim_{\mathbb{F}_2}(i,0,0) = 0$). Using the formula for the sum of consecutive odds, and by observing how much the pattern deviates (see rows 8 and 9) after the next immediate power of 2, we were able to guess a formula for $\dim_{\mathbb{F}_2}(d_1,d_2,d_3)$. Finally, by fixing $d_1$ and varying $d_2, d_3$ all within the consecutive powers of 2, we were able to come up with a conjectural formula for $\dim_{\mathbb{F}_2}(d_1,d_2,d_3)$ when $\frac{q}{2} < d_1, d_2, d_3 \leq q$ for some $q = 2^e$, which we then proved by rather elementary methods.

It is unclear how well these results can be generalized to different ideals, as general ideals will not exhibit symmetry in the variables like the ideal $(x^{d_1}, y^{d_2}, z^{d_3}, x+y+z)$. Interestingly, a similar pattern was observed for the case of polynomials over $\F_3$, however it appears to be much more complex.

%It is unknown if a similar pattern occurs in characteristic 0 fields.
%\Jenny{See Reid, Roberts and Roitman, theorem 10. Also, Stanley, WEYL GROUPS, THE HARD LEFSCHETZ THEOREM, AND THE
%SPERNER PROPERTY.}

{\bf Outline.} The paper is organized as follows. We recall some standard definitions in Section \ref{section:background} and the relevant results of \cite{HAN92} in Section \ref{section:hans-results}. We then give a proof of Theorem \ref{theorem:formula-for-dimension} in Section \ref{section:proof-of-theorem} and describe the algorithm for computing the dimension in the general case in Section \ref{section:algorithm}.

{\bf Acknowledgements.} We would like to thank the LoG(M) program (\cite{logm}) at the University of Michigan for providing a platform to run this project. We appreciate the guidance of Ahmad Barhoumi and Sam Hansen. We also appreciate Cheng Meng for pointing out that our Theorem \ref{theorem:formula-for-dimension} follows from \cite[Theorem 3.8]{HAN92}.

\section{Background}\label{section:background}
Let $R=\mathbb{F}[x_1, \ldots, x_n]$ denote the polynomial ring in $n$ variables with coefficients in a field $\mathbb{F}$. We recall the following standard definitions.
\begin{definition}
    \begin{enumerate}
        \item An element $f \in R$ is said to be a \textbf{monomial} if $f=x_1^{i_1}\dots x_n^{i_n}$ for some $i_1,\dots,i_n \in \mathbb{Z}_{\geq 0}$.
        \item The \textbf{degree} of a monomial $x_1^{i_1}\dots x_n^{i_n}$ is defined to be $i_1+\dots + i_n$ and the \textbf{degree} of a polynomial $f = \displaystyle \sum_{(i_1,\dots,i_n) \in (\mathbb{Z}_{\geq 0})^n} a_{i_1,\dots,i_n} x_1^{i_1}\dots x_n^{i_n} \in R$ is defined to be $\max \{ i_1+\dots + i_n \mid a_{i_1,\dots,i_n} \neq 0 \}$.
    \end{enumerate}
\end{definition}

%In this paper, we refer to the \textit{dimension} of a polynomial quotient ring as its vector-space dimension over $\mathbb{F}$.

% Ignoring the multiplication operation between elements, the ring $\mathbb{F}[x_1, \ldots, x_n]$ can also be thought of as a vector space over $\F$, though it is an infinite dimensional one. Likewise, given an ideal $I \subset R$, the quotient ring $R/I$ also has a vector space structure.

% Elements in the quotient ring are no longer individual polynomials, but rather equivalence classes of polynomials with respect to the ideal $I$. The additive identity of the vector space $R/I$ is the equivalence class $0+I$.

We will now review some definitions from commutative algebra.
\begin{definition}[Colon Ideal] \label{colonideal}
    Let $I$ and $J$ be arbitrary ideals in a commutative ring $R$. The colon ideal $(I:J)$ is the set
        $$(I:J) \coloneqq \{ r \in R : rJ \subseteq I \}.$$ 
    To verify that this is an ideal, suppose $a_1, a_2 \in (I:J)$. Then $a_1 j \in I$ for all $j \in J$ and $a_2 j \in I$ for all $j \in J$. Thus $(a_1 + a_2)j = a_1 j + a_2 j \in I$, since $I$ is an ideal and thus closed under addition. Similarly, if $a \in (I:J)$ and $r \in R$, then $(ra)j = r(aj) \in I$ for all $j \in J$, since $I$ is an ideal. 
\end{definition}

\begin{definition}[Short Exact Sequence]
    Let $A,B$ and $C$ be vector spaces, and let $f,g$ be linear transformations $f: A \to B, g: B \to C$ such that $f$ is injective, $g$ is surjective, and $\ker(g) =\im(f)$. Then we say that the sequence
        $$0 \to A \xrightarrow[]{f} B \xrightarrow[]{g} C \to 0$$ 
    is a \textbf{short exact sequence.} Note that the kernel of each map is equal to the image of the previous map. 
\end{definition}

\begin{lemma} \label{SES}
    Let $A,B$ and $C$ be vector spaces such that $$0 \to A \xrightarrow[]{f} B \xrightarrow[]{g} C \to 0$$ is a short exact sequence. Then
        $$ \dim(B) = \dim(A)+\dim(C).$$
    In other words, vector space dimension is additive across short exact sequences. 
\end{lemma}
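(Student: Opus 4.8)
The plan is to prove this as a direct consequence of the rank--nullity theorem applied to the surjection $g \colon B \to C$. First I would recall that rank--nullity gives
\begin{equation*}
\dim(B) = \dim(\ker(g)) + \dim(\im(g)).
\end{equation*}
Because $g$ is surjective by the definition of a short exact sequence, we have $\im(g) = C$, and so $\dim(\im(g)) = \dim(C)$. It then remains only to identify $\dim(\ker(g))$ with $\dim(A)$.

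For that step I would invoke the two remaining hypotheses in the definition of exactness. Exactness at $B$ says precisely $\ker(g) = \im(f)$, so $\dim(\ker(g)) = \dim(\im(f))$. Since $f$ is injective, the map $f \colon A \to \im(f)$ is a linear isomorphism onto its image (it is linear, surjective onto $\im(f)$ by definition, and injective by hypothesis), whence $\dim(\im(f)) = \dim(A)$. Chaining these equalities yields $\dim(\ker(g)) = \dim(A)$, and substituting into the rank--nullity identity gives $\dim(B) = \dim(A) + \dim(C)$, as claimed. Each of the three structural conditions defining a short exact sequence --- injectivity of $f$, surjectivity of $g$, and $\ker(g) = \im(f)$ --- is used exactly once, which is a good sanity check that no hypothesis is wasted.

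Since every ring appearing in this paper is Artinian and hence finite-dimensional over $\mathbb{F}_2$, the rank--nullity theorem applies without reservation, and this is really the only nontrivial input. I do not expect a genuine obstacle here; the one point requiring mild care is the justification that injectivity of $f$ forces $\dim(\im(f)) = \dim(A)$ rather than merely $\dim(\im(f)) \le \dim(A)$, which is handled cleanly by noting $f$ corestricts to an isomorphism $A \cong \im(f)$.

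As an alternative that avoids quoting rank--nullity as a black box (and that would generalize verbatim to the infinite-dimensional setting), I could argue directly with bases: fix a basis $\{a_i\}$ of $A$ and a basis $\{c_j\}$ of $C$, choose preimages $b_j \in B$ with $g(b_j) = c_j$, and verify that $\{f(a_i)\} \cup \{b_j\}$ is a basis of $B$. Spanning follows because any $b \in B$ differs from a suitable combination of the $b_j$ by an element of $\ker(g) = \im(f)$, and linear independence follows by applying $g$ to a purported dependence relation to kill the $f(a_i)$ terms, then using injectivity of $f$ on the remainder. Either route produces the desired equality $\dim(B) = \dim(A) + \dim(C)$.
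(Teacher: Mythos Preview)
Your proposal is correct and follows essentially the same approach as the paper: apply rank--nullity to $g$, use surjectivity of $g$ to get $\dim(\im(g))=\dim(C)$, use exactness at $B$ to identify $\ker(g)=\im(f)$, and use injectivity of $f$ to conclude $\dim(\im(f))=\dim(A)$. The only cosmetic difference is that the paper invokes rank--nullity a second time (on $f$) for that last step, whereas you phrase it as $f$ being an isomorphism onto its image; your added basis-construction alternative is not in the paper but is a fine remark.
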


\begin{proof}
 By the Rank Nullity Theorem, $$\dim(B) = \rank(g) + \dim(\ker(g)).$$ By assumption, $g$ is surjective, so $\rank(g) = \dim(C)$. Furthermore, we also assumed that $\ker(g) = \im(f)$, so $\dim(\ker(g)) = \dim(\im(f)) = \rank(f)$. Since $f$ is injective, another use of the Rank Nullity Theorem tells us that $$\dim(A) = \rank(f) + \dim(\ker(f)) = \rank(f).$$ Thus we have that  
 $$\dim(B) = \dim(A) + \dim(C).$$ 
\end{proof}

%\begin{definition}[Monomial]
%    \hfill \break
%    A \textbf{monomial} is a polynomial that only contains one non-zero term. For example, $x$, and $xyz$ are monomials, but $x+y$ is not. 
%\end{definition} 
%\begin{definition}[Degree]
%    \hfill \break
%    In $n$ variables, we define the \textbf{degree} of a monomial term as the sum of the powers of each $x_i$. Let $a_i \in \F$, $n$ a  nonnegative integer, and $d_i$ positive integers. Then
%    $$\deg \left( \prod_{i=1}^n a_i x_i^{d_i} \right) = \sum_{i=1}^n d_i$$
%   We define the degree of polynomial in $n$ variables as the highest degree of its monomial terms. The degree of the $0$ polynomial is undefined.
%\end{definition}
%    \begin{definition}[Monic]
%    \hfill \break
%    A polynomial is \textbf{monic} if the coefficient of its highest degree term is 1. For examples,  $x+1$ and $y^6+3$ are monic polynomials, but $2x$ is not. 
%\end{definition}

As a precursor to our main result, we will now discuss the vector space dimension of simpler rings: $R/(x_1^{d_1}, \ldots, x_n^{d_n})$ and $R_D$, where $R = \F[x_1, \dots, x_n]$. 

\begin{enumerate}
    \item Observe that the set of all monomials is a basis for $R$ as a vector space over $\mathbb{F}$.
    %For example, the polynomial $3x + 9y + 2z$ is in the span of the monic monomials $x$, $y$, and $z$. Intuitively, every polynomial is the weighted sum of different monic monomials. 
%Consider the monomial ideal $\mathcal{I}=(x_1^{d_1}, \ldots, x_n^{d_n})$ in a polynomial ring $R$, how many monomials are linearly independent in $R/\mathcal{I}$?
%From its definition, any monic monomial is just a product of some variables raised to some powers.
%If we consider an arbitrary monomial, each $x_i$ term can have degree in $\{0, 1, \ldots, d_i-1\}$, a total of $d_i$ choices. Any higher degree and the term would be in the ideal and thus congruent to $0 + \mathcal{I}$ in $R/\mathcal{I}$.
Similarly, we see that the set of monomials $S = \{ x_1^{i_1}\dots x_n^{i_n} \mid 0 \leq i_j \leq d_j-1 \text{ for every }j \}$ is a basis for $R/(x_1^{d_1}, \ldots, x_n^{d_n})$. The number of elements of $S$ is given by $\prod_{i=1}^n d_i$ since for an arbitrary element $x_1^{i_1}\dots x_n^{i_n}$ of $S$, $i_j$ has $d_j$ choices for each $j$. This gives
\[\dim_{\mathbb{F}} R/(x_1^{d_1}, \ldots, x_n^{d_n}) = \prod_{i=1}^n d_i. \]
    \item To calculate $\dim_{\mathbb{F}}(R_D)$,
    %Another natural question one may ask is: what is the vector space dimension of the set of all polynomials of a particular degree, say $D$? 
        we count the number of monomials $x_1^{d_1} \cdots x_n^{d_n}$ of degree $D$, which is equivalent to finding the number of non-negative integer solutions to:
$$d_1 + \ldots + d_n = D \quad \text{such that} \quad 0 \leq d_i$$
We use a ``stars and bars'' argument, in which we think of $D$, the total number of indeterminates in the monomial, as the number of stars, and the bars divide the indeterminates into $n$ categories, one for each variable $x_1, \dots, x_n$.  We then see that
$$\dim_\F(R_D) = \binom{D+n-1}{n-1}.$$
\end{enumerate}

%Putting these observations together yields some basic, preliminary formulas:
%\begin{center}
%    \begin{tabular}{c|c}
%        Vector Space & Dimension (counted by finding a basis of monic monomials) \\ \hline
%        $R/(x_1^{d_1}, \dots, x_n^{d_n})$ & $\ds{\prod_{i=1}^n d_i}$ \\ \hline
%        $R_D$ & $\ds{\binom{D+n-1}{n-1}}$
%    \end{tabular}
%\end{center}

\section{Han's results}\label{section:hans-results}
We now revisit the quotient ring of interest $R/I$ where $R = \mathbb{F}_2[x, y, z]$ and $I = (x^{d_1}, y^{d_2}, z^{d_3}, x+y+z)$.
%\begin{align*}
%    R & = \mathbb{F}_2[x, y, z] \\
%    I & = (x^{d_1}, y^{d_2}, z^{d_3}, x+y+z) \\
%    R/I & = \mathbb{F}_2[x, y, z]/(x^{d_1}, y^{d_2}, z^{d_3}, x + y + z)
%\end{align*}
Notice that $I$ is not a monomial ideal and so, unlike in the previous section, nonzero monomials in $R/I$ don't necessarily form a basis for $R/I$ over $\F$. For example, the elements $\{x+I,y+I,z+I \}$ do not form a linearly independent set since $x+y+z+I=0+I$. So, counting the vector space dimension of $R/I$ is more complicated.

We now construct an isomorphism of $R/I$ to a polynomial ring in one fewer variable. First, we define notation that will be useful.
\begin{notation}
    Let $R$ be a commutative ring, $I$ an ideal of $R$, and $r$ and element of $R$. When the ideal $I$ is clear by context, we use $\overline{r}$ to denote the congruence class $r+I \in R/I$. 
\end{notation}

Consider the map:
\begin{align*}
    \phi: \mathbb{F}_2[x, y, z]/(x^{d_1}, y^{d_2}, z^{d_3}, x + y + z) & \to \mathbb{F}_2[x, y]/(x^{d_1}, y^{d_2}, (x+y)^{d_3}) \\
    \varphi(\overline{f(x,y,z))}) &= \overline{f(x,y,(x+y))}
\end{align*}
We can easily check that $\varphi$ is well defined and a ring homomorphism. In fact, $\varphi$ is an isomorphism since we can define the inverse map as follows:
\begin{align*}
    \varphi^{-1} : \mathbb{F}_2[x, y]/(x^{d_1}, y^{d_2}, (x+y)^{d_3}) & \to \mathbb{F}_2[x, y, z]/(x^{d_1}, y^{d_2}, z^{d_3}, x + y + z) \\
%  \varphi^{-1}( \overline{1}) &= \overline{1} \\
%    \varphi^{-1}(\overline{x}) &= \overline{x} \\
%    \varphi^{-1}(\overline{y}) &= \overline{y}
    \varphi^{-1}(\overline{f(x,y)}) &= \overline{f(x,y)}.
\end{align*}
%and extending so that $\varphi^{-1}$ is a homomorphism, i.e., $$\varphi^{-1}(\overline{f(x,y)}) = \overline{f(x,y)}.$$ 

\noindent Let us check that $\varphi^{-1}$ is well-defined. Suppose that  $f(x,y)-g(x,y) \in (x^{d_1}, y^{d_2}, (x+y)^{d_3}))$. Then $f(x,y)-g(x,y)=ax^{d_1}+by^{d_2} + c(x+y)^{d_3}$ for some $a,b,c \in \mathbb{F}_2[x,y]$. Note also that in $\mathbb{F}_2[x,y,z]/(x^{d_1}$,$y^{d_2}, z^{d_3}, (x+y+z))$, the congruence classes $z + (x^{d_1}, y^{d_2}, z^{d_3}, x + y + z)$ and $x+y+ (x^{d_1}, y^{d_2}, z^{d_3}, x + y + z)$ are equal. Thus,  $\overline{(x+y)^{d_3}} = \overline{z^{d_3}}$. Therefore, $ax^{d_1}+by^{d_2} + c(x+y)^{d_3} \in (x^{d_1}, y^{d_2}, z^{d_3},(x+y+z)) $. Thus $\overline{f(x,y)} = \overline{g(x,y)}$ in $\mathbb{F}_2[x,y,z]/(x^{d_1}, y^{d_2}, z^{d_3},(x+y+z))$, so the map is well-defined.

\noindent Let us now check that $\varphi \circ \varphi^{-1}$ and $\varphi^{-1} \circ \varphi$ are the identity maps on $\mathbb{F}_2[x,y]/(x^{d_1}, y^{d_2},(x+y)^{d_3})$ and $\mathbb{F}_2[x,y,z]/(x^{d_1}, y^{d_2}, z^{d_3},(x+y+z))$, respectively. For any $f \in \F_2[x,y]$. we have
\begin{align*}
    \varphi (\varphi^{-1} (\overline{f(x,y})) &= \varphi(\overline{f(x,y)}) \\ 
    &= \overline{f(x,y)}
\end{align*}
since $f(x,y)$ does not contain $z$. Conversely, for any $f(x,y,z) \in \F_2[x,y,z]$, we have
\begin{align*}
    \varphi^{-1}(\varphi(\overline{f(x,y,z))}) &= \varphi^{-1}(\overline{f(x,y,x+y)}) \\ 
    &= \overline{f(x,y,x+y)} \\
    &= \overline{f(x,y,z))}
\end{align*}
since $ \overline{x+y} = \overline{z}$ in $\mathbb{F}_2[x,y,z]/(x^{d_1}, y^{d_2}, z^{d_3}, x+y+z)$. Thus we have an isomorphism:
\[ \mathbb{F}_2[x, y, z]/(x^{d_1}, y^{d_2}, z^{d_3}, x + y + z)  \xrightarrow{\sim} \mathbb{F}_2[x, y]/(x^{d_1}, y^{d_2}, (x+y)^{d_3}) \]
In particular, we have:
\begin{equation}\label{equation:isomorphism-2-variables}
    \dim_{\F_2}\mathbb{F}_2[x, y, z]/(x^{d_1}, y^{d_2}, z^{d_3}, x + y + z) = \dim_{\F_2}\mathbb{F}_2[x,y]/(x^{d_1}, y^{d_2}, (x+y)^{d_3}),
\end{equation}
Also, we note that the LHS above is symmetric in $d_1,d_2,d_3$ and so, the RHS is as well. Therefore we may and will assume that $d_1 \leq d_2 \leq d_3$.

% Note that in the quotient ring $\mathbb{F}_2[x, y, z]/(x^{d_1}, y^{d_2}, z^{d_3}, x + y + z)$, the elements $\overline{z} = -\overline{x+y}$. Therefore in $\mathbb{F}_2[x, y, z]/(x^{d_1}, y^{d_2}, z^{d_3}, x + y + z)$, 
%  $$ \overline{ ax^{d_1}+by^{d_2} + c(x+y)^{d_3}} = \overline{ax^{d_1}+by^{d_2} + c(z)^{d_3}}.$$ 
%Verifying that $\varphi \circ \varphi^{-1}$ is the identity map is also straightforward.

%The following approach focuses on not calculating the overall dimension outright, but rather reducing $d_1, d_2, d_3$ bit by bit, collecting dimension as we go.

We now state two results from \cite{HAN92}, specialized to the particular case $\F = \mathbb{F}_2$, which tell us how to calculate $\dim_{\F_2}\mathbb{F}_2[x,y]/(x^{d_1}, y^{d_2}, (x+y)^{d_3})$ in certain cases.

\begin{proposition}\cite[Proposition 1.6 (i)]{HAN92} \label{corollary:han-result-power-of-2}
    \hfill \break
    Suppose that $d_1, d_2 \leq q$, and $d_3 \geq q$ where $q = 2^e$ is some power of 2. Then:
    \begin{align*} 
        \dim_{\mathbb{F}_2} \mathbb{F}_2[x, y]/(x^{d_1}, y^{d_2}, (x+y)^{d_3})
        & = \dim_{\mathbb{F}_2} \mathbb{F}_2[x, y]/(x^{d_1}, y^{d_2}) \\
        & = d_1 \cdot d_2
    \end{align*}
\end{proposition}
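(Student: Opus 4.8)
The plan is to show that the third generator is redundant: I claim that $(x^{d_1}, y^{d_2}, (x+y)^{d_3}) = (x^{d_1}, y^{d_2})$ as ideals of $\mathbb{F}_2[x,y]$. Once this is established, the two quotient rings in the statement are literally equal, so they have the same vector space dimension, and that dimension is $d_1 d_2$ by the monomial-basis (``stars and bars'') count recalled in \Cref{section:background}.

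The key input is the characteristic-$2$ Frobenius identity $(x+y)^q = x^q + y^q$, which holds because $q = 2^e$ and every binomial coefficient $\binom{q}{k}$ with $0 < k < q$ is even (a standard fact, provable via Kummer's theorem on carries or Lucas' theorem; I would include a one-line justification). Since $d_3 \geq q$, I can factor $(x+y)^{d_3} = (x+y)^{d_3 - q}(x+y)^q = (x+y)^{d_3-q}(x^q + y^q)$, so $(x+y)^{d_3} \in (x^q, y^q)$. On the other hand, $d_1 \leq q$ gives $x^q = x^{q - d_1} x^{d_1} \in (x^{d_1})$, and similarly $d_2 \leq q$ gives $y^q \in (y^{d_2})$; hence $(x^q, y^q) \subseteq (x^{d_1}, y^{d_2})$. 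Combining, $(x+y)^{d_3} \in (x^{d_1}, y^{d_2})$, so adjoining it to the ideal does not change it.

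I do not anticipate a genuine obstacle here; the proof is short. The only point requiring care is making sure all three hypotheses are actually used: $d_1 \leq q$ and $d_2 \leq q$ are needed to absorb $x^q$ and $y^q$ into the monomial generators, while $d_3 \geq q$ is needed in order to factor a full copy of $(x+y)^q$ out of $(x+y)^{d_3}$. After that, invoking the computation $\dim_{\mathbb{F}_2}\mathbb{F}_2[x,y]/(x^{d_1},y^{d_2}) = d_1 d_2$ from \Cref{section:background} finishes the argument.
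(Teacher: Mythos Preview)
Your argument is correct. The paper does not actually prove this proposition; it merely quotes it as \cite[Proposition 1.6(i)]{HAN92} and uses it as a black box. Your Frobenius-based argument---factor $(x+y)^{d_3}=(x+y)^{d_3-q}(x^q+y^q)$ and absorb $x^q,y^q$ into $(x^{d_1},y^{d_2})$ using $d_1,d_2\le q$---is the standard one and is exactly what Han's proof amounts to when specialized to $\mathbb{F}_2$.
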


% \begin{proof}
%     Since $d_3 \geq q$ we can write:
%     \begin{align*}
%         (x+y)^{d_3} & = (x+y)^{d_3 - q + q} \\
%         & = (x+y)^{d_3 - q} \cdot (x+y)^q \\
%         & = (x+y)^{d_3 - q} \cdot (x+y)^{2^e}
%     \end{align*}
%     Because 2 divides $\binom{2^e}{i}$ for all $1 \leq i < 2^e$, the middle terms in the binomial expansion of $(x+y)^{2^e}$ will be divisible by 2 and are zero in $\mathbb{F}_2$. Therefore we are left with the freshman's dream:
%     \begin{align*}
%         & = (x+y)^{d_3 - q} \cdot (x+y)^{2^e} \\
%         & = (x+y)^{d_3 - q} \cdot (x^{2^e} + y^{2^e}) \\
%         & = (x+y)^{d_3 - q} \cdot (x^q + y^q)
%     \end{align*}
%     As $d_1, d_2 \leq q$, the term on the right is in the ideal $(x^{d_1}, y^{d_2})$. This implies that the ideal $(x^{d_1}, y^{d_2}, (x+y)^q)$ is equal to the ideal $(x^{d_1}, y^{d_2})$, so 
%     $$\dim \mathbb{F}_2[x, y]/(x^{d_1}, y^{d_2}) = d_1 \cdot d_2 $$
% \end{proof}

\begin{proposition}
    \cite[Proposition 1.6 (ii)]{HAN92} \label{cor:han-reduction}
    \hfill \break
    Suppose that $d_1 \leq q$, and $d_2, d_3 \geq q$ where $q = 2^e$ is some power of 2. Then:
    \begin{align*} 
        \dim_{\mathbb{F}_2} \mathbb{F}_2[x, y]/(x^{d_1}, y^{d_2}, (x+y)^{d_3}) = \: & d_1 \cdot q \\
        + \: & \dim_{\mathbb{F}_2} \mathbb{F}_2[x, y]/(x^{d_1}, y^{d_2-q}, (x+y)^{d_3 - q})
    \end{align*}
\end{proposition}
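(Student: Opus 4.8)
The plan is to extract both summands of the right-hand side from a single short exact sequence coming from multiplication by $y^q$. Write $R=\F_2[x,y]$ and $I=(x^{d_1},y^{d_2},(x+y)^{d_3})$. Multiplication by $y^q$ yields the standard short exact sequence
\[
0 \to R/(I:y^q) \xrightarrow{\;\cdot y^q\;} R/I \to R/\big(I+(y^q)\big) \to 0,
\]
where $(I:y^q)$ is the colon ideal of \Cref{colonideal}; the first map $\overline{r}\mapsto\overline{y^q r}$ has kernel exactly $(I:y^q)$, so it is injective on the source $R/(I:y^q)$, and its cokernel is $R/(I+(y^q))$. By \Cref{SES} it then suffices to establish the two identities
\[
\dim_{\F_2} R/\big(I+(y^q)\big) = d_1 q
\qquad\text{and}\qquad
(I:y^q) = (x^{d_1},\,y^{d_2-q},\,(x+y)^{d_3-q}).
\]

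For the cokernel I would first simplify $I+(y^q)$ to the monomial ideal $(x^{d_1},y^q)$. The engine is the Frobenius identity $(x+y)^q=x^q+y^q$, valid since $q$ is a power of $2$. Using $d_1\le q$ we have $x^q=x^{d_1}x^{q-d_1}\in(x^{d_1})$, so $(x+y)^q\in(x^{d_1},y^q)$; using $d_2\ge q$ we have $y^{d_2}\in(y^q)$; and using $d_3\ge q$ we have $(x+y)^{d_3}=(x+y)^{d_3-q}(x+y)^q\in(x^{d_1},y^q)$. Hence all three generators of $I$ are absorbed into $(x^{d_1},y^q)$, so $I+(y^q)=(x^{d_1},y^q)$, whose quotient has dimension $d_1\cdot q$ by the monomial-basis count of \Cref{section:background}.

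The colon ideal is the main obstacle, and where the hypotheses do their real work. The inclusion $(x^{d_1},y^{d_2-q},(x+y)^{d_3-q})\subseteq(I:y^q)$ is a direct check on generators, again invoking Frobenius to see that $(x+y)^{d_3-q}y^q=(x+y)^{d_3-q}(x^q+y^q)\in I$. For the reverse inclusion, since both ideals contain $x^{d_1}$, I would pass to $\overline{R}=R/(x^{d_1})$. There $x^q=0$, which forces the collapse $(x+y)^q=y^q$; this makes both surviving generators of $\overline{I}$ divisible by $y^q$, namely $y^{d_2}=y^{d_2-q}\,y^q$ and $(x+y)^{d_3}=(x+y)^{d_3-q}\,y^q$. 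Thus $\overline{I}=y^q\,\overline{L}$ where $\overline{L}=(y^{d_2-q},(x+y)^{d_3-q})\overline{R}$, and computing the colon reduces to $(\overline{I}:y^q)=\overline{L}+(0:y^q)_{\overline{R}}$. Finally $y^q$ is a nonzerodivisor in $\overline{R}=\F_2[x,y]/(x^{d_1})$, since $\overline{R}$ is free over $\F_2[y]$ on the basis $1,x,\dots,x^{d_1-1}$ and multiplication by $y$ is therefore injective; hence $(0:y^q)_{\overline{R}}=0$ and $(\overline{I}:y^q)=\overline{L}$. Lifting back gives $(I:y^q)=(x^{d_1},y^{d_2-q},(x+y)^{d_3-q})$, and feeding both identities into \Cref{SES} yields the claimed formula.
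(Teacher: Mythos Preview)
The paper does not actually supply a proof of this proposition; it is simply quoted from \cite{HAN92} and used as a black box. So there is nothing to compare your argument against in the paper itself.

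That said, your proof is correct and self-contained. The short exact sequence for multiplication by $y^q$, together with the identification of $I+(y^q)$ and of $(I:y^q)$, is exactly the right structure, and your computation of the colon ideal by passing to $\overline{R}=R/(x^{d_1})$ and using that $y$ is a nonzerodivisor there is clean. One cosmetic point: the displayed equality $(x+y)^{d_3-q}y^q=(x+y)^{d_3-q}(x^q+y^q)$ is not literally true; what you mean is that $(x+y)^{d_3-q}(x^q+y^q)=(x+y)^{d_3}\in I$, and hence $(x+y)^{d_3-q}y^q=(x+y)^{d_3}+x^q(x+y)^{d_3-q}\in I$ since $x^q\in(x^{d_1})$. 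The conclusion is unaffected.
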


\section{Proof of Theorem \ref{theorem:formula-for-dimension}}\label{section:proof-of-theorem}

In this section, we provide a proof of Theorem \ref{theorem:formula-for-dimension}, which recovers a result of \cite[Theorem 3.8]{HAN92}. Let us recall the statement.

\majorThm* 

\noindent By Equation \ref{equation:isomorphism-2-variables}, it suffices to show that 
$$\dim_{\F_2}\frac{\mathbb{F}_2[x,y]}{(x^{d_1}, y^{d_2}, (x+y)^{d_3})} = d_1d_2 + d_2d_3 + d_1d_3 -q(d_1+d_2+d_3) +q^2.$$
To prove Theorem \ref{theorem:formula-for-dimension}, we first prove the following lemmas.

\begin{lemma}\label{lemma:dimension-of-successive-quotients}
    Let $q$ be a power of $2$ and let $\frac{q}{2}< d_1
    \leq d_2 \leq  q$. 
    For any $\frac{q}{2}< t < q$, define:
\begin{align*}
    J_t := (x^{d_1},y^{d_2},(x+y)^{t}).
\end{align*}
    Then:
\[ \dim \frac{J_t}{J_{t+1}} = d_1 + d_2 - q. \]
\end{lemma}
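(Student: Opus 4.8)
The plan is to understand the quotient $J_t/J_{t+1}$ as a vector space over $\mathbb{F}_2$. Since $J_{t+1} \subseteq J_t$ (because $(x+y)^{t+1} = (x+y)\cdot(x+y)^t \in J_t$), the quotient makes sense, and the natural generating set for $J_t/J_{t+1}$ as an $\mathbb{F}_2[x,y]$-module is the single element $\overline{(x+y)^t}$, since $J_t = J_{t+1} + (x+y)^t \cdot \mathbb{F}_2[x,y]$ — any element of $J_t$ not needing the $(x+y)^t$ generator already lies in $J_{t+1}$. So $J_t/J_{t+1}$ is a cyclic $\mathbb{F}_2[x,y]$-module generated by $\overline{(x+y)^t}$, hence isomorphic to $\mathbb{F}_2[x,y]/(J_{t+1} : (x+y)^t)$. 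Thus I would first reduce the problem to computing $\dim_{\mathbb{F}_2} \mathbb{F}_2[x,y]/(J_{t+1}:(x+y)^t)$, and show this colon ideal equals $(x^{d_1}, y^{d_2}, x+y)$ — the key point being that $(x+y)^t \cdot g \in J_{t+1} = (x^{d_1}, y^{d_2}, (x+y)^{t+1})$ forces $g \in (x^{d_1}, y^{d_2}, x+y)$.

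To justify that colon computation, I would pass to the isomorphic ring $\mathbb{F}_2[x,y,z]/(x^{d_1},y^{d_2},z^{d_3}, x+y+z)$-style setup but more directly: $(x+y)^t g \in (x^{d_1}, y^{d_2}, (x+y)^{t+1})$ means $(x+y)^t g \equiv 0$ in $\mathbb{F}_2[x,y]/(x^{d_1},y^{d_2},(x+y)^{t+1})$. Working modulo $x^{d_1}$ and $y^{d_2}$ only, and using that $(x+y)$ is a nonzerodivisor on $\mathbb{F}_2[x,y]/(x^{d_1}, y^{d_2})$ up through degree considerations — actually the cleaner route is: in $\mathbb{F}_2[x,y]/(x^{d_1},y^{d_2})$, multiplication by $(x+y)$ has a kernel, and I want to show $(x+y)^t g \in ((x+y)^{t+1})$ modulo $(x^{d_1},y^{d_2})$ implies $(x+y)g \in (x^{d_1},y^{d_2})$ plus lower multiples. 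This is where Han's structural results, in particular that the colon ideal $(x^{d_1}, y^{d_2} : (x+y)^s)$ is generated in the degrees recorded in the excerpt, should be invoked; for $q/2 < t < q \le 2^e$ with $d_1, d_2 \le q$, one checks directly that $(x+y)^q \in (x^{d_1}, y^{d_2})$ (since $(x+y)^q = x^q + y^q$ in characteristic $2$ and $q \ge d_1, d_2$), which pins down the relevant colon structure.

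Granting that $(J_{t+1}:(x+y)^t) = (x^{d_1}, y^{d_2}, x+y)$, the dimension count becomes $\dim_{\mathbb{F}_2} \mathbb{F}_2[x,y]/(x^{d_1}, y^{d_2}, x+y)$. Using the isomorphism $\varphi^{-1}$-type substitution $y \mapsto x$ (eliminating $y$ via $x + y$), this ring is $\mathbb{F}_2[x]/(x^{d_1}, x^{d_2}) = \mathbb{F}_2[x]/(x^{\min(d_1,d_2)}) = \mathbb{F}_2[x]/(x^{d_1})$, which has dimension $d_1$ — but that does not match $d_1 + d_2 - q$, so the colon ideal must actually be larger/different. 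The correct statement is presumably $(J_{t+1}:(x+y)^t) = (x^{d_1}, y^{d_2}, (x+y)^{q-t+\text{something}})$ or the quotient $J_t/J_{t+1}$ has dimension governed by the overlap of $(x+y)^t \mathbb{F}_2[x,y]$ with $(x^{d_1}, y^{d_2})$; I would compute $\dim J_t/J_{t+1}$ instead via the short exact sequence and additivity (Lemma \ref{SES}): $\dim J_t/J_{t+1} = \dim R/J_{t+1} - \dim R/J_t$, reducing the lemma to known or inductively-tractable dimension formulas. The main obstacle is precisely identifying this colon ideal / intersection; once that is in hand, the arithmetic $d_1 + d_2 - q$ should fall out from a stars-and-bars or direct monomial count, possibly using that $(x+y)^t$ has $\mathbb{F}_2$-expansion supported on monomials $x^i y^{t-i}$ with $\binom{t}{i}$ odd, and Kummer's theorem on carries base $2$ to control which of these survive modulo $x^{d_1}, y^{d_2}$ in the range $q/2 < t < q$.
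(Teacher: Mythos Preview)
Your reduction to a colon ideal is correct and is, in effect, what the paper does: $J_t/J_{t+1}$ is cyclic over $\mathbb{F}_2[x,y]$ with generator $\overline{(x+y)^t}$, so $J_t/J_{t+1}\cong \mathbb{F}_2[x,y]/\bigl(J_{t+1}:(x+y)^t\bigr)$. The gap is that you never identify this colon ideal, and your first guess $(x^{d_1},y^{d_2},x+y)$ is too small, as you yourself notice. The correct answer is
\[
\bigl(J_{t+1}:(x+y)^t\bigr)=(x+y,\,x^{d_1+d_2-q}),
\]
whose quotient visibly has dimension $d_1+d_2-q$. What you are missing for the containment $\supseteq$ is the Frobenius identity $(x+y)^{q/2}=x^{q/2}+y^{q/2}$: since $t>q/2$, one writes $(x+y)^t=(x^{q/2}+y^{q/2})(x+y)^{t-q/2}$, and then for any $i\geq d_1+d_2-q$ one can split $i=i_1+i_2$ with $i_1\geq d_1-\tfrac{q}{2}$ and $i_2\geq d_2-\tfrac{q}{2}$, giving
\[
x^i(x+y)^t = x^{i_1}y^{i_2}(x^{q/2}+y^{q/2})(x+y)^{t-q/2}\in (x^{d_1},y^{d_2})\subseteq J_{t+1}.
\]
This is the spanning step in the paper. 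For the reverse inclusion (linear independence of $\{x^i(x+y)^t:0\le i\le d_1+d_2-q-1\}$), the paper multiplies a putative relation by $(x+y)^{q-1-t}$ and exploits that in $\mathbb{F}_2$ one has $(x+y)^{q-1}=\sum_{i=0}^{q-1}x^iy^{q-1-i}$ with all coefficients $1$; a short computation then exhibits a surviving monomial $x^{d_1-1}y^{q+a-d_1}$ with both exponents below $d_1,d_2$, contradicting membership in $(x^{d_1},y^{d_2})$. Your gesture toward Kummer's theorem is in the right spirit but never crystallizes into this argument.

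Finally, your fallback of computing $\dim J_t/J_{t+1}=\dim R/J_{t+1}-\dim R/J_t$ via Lemma~\ref{SES} is circular: the lemma you are trying to prove is precisely the input used (together with that short exact sequence) to compute $\dim R/J_t$ in the proof of the main theorem.
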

\begin{proof}
Consider $S = \{ x^i(x+y)^{t} \mid 0 \leq i \leq d_1+d_2-q-1 \}$. We will prove in two steps that $S$ is a basis of $\frac{J_t}{J_{t+1}}$, thereby proving the lemma.

\textbf{Step 1:} Let us show that $S$ is a spanning set of $\frac{J_t}{J_{t+1}}$. First observe that:
\begin{align*}
    (x+y)^{t+1} &= (x+y)(x+y)^{t}\\
    &= x(x+y)^{t} + y(x+y)^{t}\\
    \implies x(x+y)^{t} &= y(x+y)^{t} \text{ in } \frac{J_t}{J_{t+1}}.
\end{align*}
Thus $\frac{J_t}{J_{t+1}}$ is spanned by the set $\{x^i(x+y)^t \mid i \geq 0 \}$. Now, if we have $x^{i}(x+y)^{t}$ for any $i \geq d_1+d_2-q$, writing $i=i_1+i_2$ with $i_1 \geq d_1-\frac{q}{2}$ and $i_2 \geq d_2-\frac{q}{2}$, we get:
\begin{align*}
    x^{i}(x+y)^{t} &= x^{i_1} y^{i_2} (x+y)^{\frac{q}{2}}(x+y)^{t-\frac{q}{2}}\\
    &= x^{i_1} y^{i_2} (x^{\frac{q}{2}}+y^{\frac{q}{2}})(x+y)^{t-\frac{q}{2}}\\
    &= (x^{\frac{q}{2}+i_1}y^{i_2}+x^{i_1}y^{\frac{q}{2}+i_2})(x+y)^{t-\frac{q}{2}}\\
    &= 0
\end{align*}
since $x^{d_1} = 0 = y^{d_2} \text{ in } \frac{J_t}{J_{t+1}}$. This finishes Step 1.

\textbf{Step 2:} Let us show that $S$ is linearly independent. Assume for the sake of contradiction that there exists:
\[  \sum_{i=0}^{d_1+d_2-q-1} \lambda_i x^i (x+y)^t = 0 \text{ in } \frac{J_t}{J_{t+1}} \]
with $\lambda_a \neq 0$ (equivalently, $\lambda_a = 1$) for some $0 \leq a \leq d_1+d_2 - q - 1$. We get:
\begin{align*}
    &\sum_{i=0}^{d_1+d_2-q-1} \lambda_i x^i (x+y)^t = x^{d_1}f + y^{d_2}g + (x+y)^{t+1}h \text{ in } \mathbb{F}_2[x,y]\\
     \implies &(x+y)^t \left( \sum_{i=0}^{d_1+d_2-q-1} \lambda_i x^i - (x+y)h \right)  = x^{d_1}f + y^{d_2}g.
\end{align*}
for some $f,g,h \in \mathbb{F}_2[x,y]$. Now  look at this equality in degree $t+a$:
\[ (x+y)^t \left( x^a - (x+y)\widetilde{h} \right)  = x^{d_1}\widetilde{f} + y^{d_2}\widetilde{g} \]
where $\widetilde{f},\widetilde{g},\widetilde{h}$ are homogeneous polynomials. Multiply by $(x+y)^{q-1-t}$ on both sides to get:
\begin{equation} \label{equation:linear-dependence}
    (x+y)^{q-1} \left( x^a - (x+y)\widetilde{h} \right)  = x^{d_1}\left[ (x+y)^{q-1-t}\widetilde{f} \right] + y^{d_2}\left[(x+y)^{q-1-t}\widetilde{g} \right].
\end{equation}

We will now produce a term $x^{b_1}y^{b_2}$ in the LHS such that $b_1 < d_1$ and $b_2 < d_2$. This would be a contradiction since every monomial $x^{a_1}y^{a_2}$ appearing in the RHS of (\ref{equation:linear-dependence}) has either $a_1\geq d_1$ or $a_2 \geq d_2$. By Lemma \ref{lemma-characterization-of-polynomials} below, we can write
\begin{align*}
    x^a - (x+y)\widetilde{h} = \sum_{i=0}^a \mu_ix^iy^{a-i}
\end{align*}
with $\mu_i$ satisfying $\displaystyle \sum_{i=0}^a \mu_i = 1$. Therefore we get:
\begin{align*}
    (x+y)^{q-1} \left( \sum_{i=0}^a \mu_ix^iy^{a-i} \right) &= \left( \sum_{i=0}^{q-1} x^i y^{q-1-i} \right) \left( \sum_{i=0}^a \mu_ix^iy^{a-i} \right) \\
    &= \sum_{i=0}^{q-1+a} \left( \sum_{k=\max(i-(q-1),0)}^{\min(i,a)} \mu_k \right) x^iy^{q-1+a-i}.
\end{align*}
Now, consider the term $x^{d_1-1}y^{q+a - d_1}$. Since $d_1-1<q-1$, we have that $\max(d_1-1 -(q-1),0) = 0$, and since $d_1 -1 \geq d_1-1+ d_2-q \geq a$, we have that $\min(d_1-1,a) = a$. Therefore the coefficient of $x^{d_1-1}y^{q+a - d_1}$ is just $\displaystyle \sum_{k=0}^{a} \mu_i = 1$. This implies that the term $x^{d_1-1}y^{q+a - d_1}$ survives in the LHS of (\ref{equation:linear-dependence}) and it is indeed of the form $x^{b_1}y^{b_2}$ such that $b_1 < d_1$ and $b_2 < d_2$, since $q+a-d_1 < d_2$. This finishes the proof. 
\end{proof}

\begin{lemma}\label{lemma-characterization-of-polynomials}
    Let $S_d$ denote the set of all homogeneous degree $d$ polynomials in $\mathbb{F}_2[x,y]$. Consider the following injective map:
    \begin{align*}
        \varphi_a: S_{a-1} &\hookrightarrow S_a\\
        p(x,y) &\mapsto x^a - p(x,y)(x+y).
    \end{align*}
    Then the image is exactly the subset
    \[ \left\{ \sum_{i=0}^a \mu_ix^iy^{a-i} \mid \displaystyle\sum_{i=0}^a \mu_i = 1 \right\}. \]
\end{lemma}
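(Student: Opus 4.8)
The plan is to analyze the map $\varphi_a$ directly on a basis. The domain $S_{a-1}$ has basis $\{x^{a-1}, x^{a-2}y, \ldots, y^{a-1}\}$, so it is $a$-dimensional, while the target $S_a$ is $(a+1)$-dimensional with basis $\{x^a, x^{a-1}y, \ldots, y^a\}$. First I would note that $\varphi_a$ is affine-linear rather than linear (because of the constant term $x^a$), so the cleanest approach is to split it: write $\varphi_a(p) = x^a - \psi_a(p)$ where $\psi_a(p) = p(x,y)(x+y)$ is genuinely $\mathbb{F}_2$-linear, $\psi_a : S_{a-1} \to S_a$. Injectivity of $\varphi_a$ is then equivalent to injectivity of $\psi_a$, which is immediate since $\mathbb{F}_2[x,y]$ is a domain and $x+y \neq 0$: if $p(x+y) = q(x+y)$ then $p = q$.

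Next I would identify the image of $\psi_a$. Multiplication by $x+y$ sends $S_{a-1} \to S_a$; I claim its image is exactly $W := \left\{\sum_{i=0}^a \mu_i x^i y^{a-i} \mid \sum_{i=0}^a \mu_i = 0\right\}$, the hyperplane of "balanced" coefficient vectors. The inclusion $\operatorname{im}(\psi_a) \subseteq W$ follows by evaluating: a polynomial $f \in S_a$ lies in $W$ iff $f(1,1) = 0$ (working in $\mathbb{F}_2$), and $(p \cdot (x+y))(1,1) = p(1,1)\cdot(1+1) = 0$. For the reverse inclusion, a dimension count suffices: $W$ has dimension $a$ (it is the kernel of the nonzero functional $f \mapsto f(1,1)$ on the $(a+1)$-dimensional space $S_a$), and $\psi_a$ is injective from an $a$-dimensional space, so $\operatorname{im}(\psi_a)$ is an $a$-dimensional subspace of the $a$-dimensional space $W$, hence equal to it. Alternatively one can exhibit explicit preimages, e.g. telescoping, but the dimension argument is shortest.

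Finally I would translate back through $\varphi_a(p) = x^a - \psi_a(p) = x^a + \psi_a(p)$ (characteristic $2$). Since $x^a$ has coefficient vector summing to $1$ and $\psi_a$ has image $W = \{\text{sum} = 0\}$, the image of $\varphi_a$ is the coset $x^a + W = \left\{\sum_{i=0}^a \mu_i x^i y^{a-i} \mid \sum_{i=0}^a \mu_i = 1\right\}$, which is exactly the claimed set. This also re-confirms injectivity of $\varphi_a$ since translation is a bijection and $\psi_a$ is injective.

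I do not expect a serious obstacle here; the only point requiring a little care is that $\varphi_a$ is affine, not linear, so one must not blindly invoke linear-algebra statements about $\varphi_a$ itself but rather factor through the linear map $\psi_a$. The mildly delicate step is pinning down $\operatorname{im}(\psi_a) = W$ precisely, and the evaluation-at-$(1,1)$ functional (equivalently, reduction modulo $x+y$, using that $\mathbb{F}_2[x,y]/(x+y) \cong \mathbb{F}_2[x]$) makes that transparent.
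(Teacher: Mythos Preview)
Your proof is correct and follows essentially the same route as the paper's: both identify the target set via the evaluation-at-$(1,1)$ functional and then close the gap by a dimension/cardinality count (the paper counts $2^a$ elements in $\mathrm{ev}^{-1}(1)$ and in $\mathrm{Im}(\varphi_a)$, which is exactly your injectivity-plus-dimension argument). Your explicit decomposition $\varphi_a = x^a + \psi_a$ into a translate of a linear map is a minor presentational difference, not a different idea.
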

\begin{proof}
    Note that $S_a$ is a vector space over $\mathbb{F}_2$ of dimension $a+1$. Consider the linear map given by evaluating at $(1,1)$:
    \begin{align*}
        \mathrm{ev} : S_a &\to \mathbb{F}_2\\
        \sum_{i=0}^a \mu_ix^iy^{a-i} =: r(x,y) &\mapsto r(1,1) = \sum_{i=0}^a \mu_i.
    \end{align*}
    Observe:
    \[ \mathrm{ev}^{-1}(1) = \{ \sum_{i=0}^a \mu_ix^iy^{a-i} \mid \sum_{i=0}^a \mu_i = 1\}. \]
    The set $\mathrm{ev}^{-1}(1)$ has $2^a$ elements since it is a translate of the kernel of $\mathrm{ev}$. Now, for any $p(x,y) \in S_{a-1}$, we have that $x^a - p(x,y)(x+y) \in \mathrm{ev}^{-1}(1)$ since its value at $(1,1)$ is $1$. Thus $\mathrm{Im} (\varphi_a) \subset \mathrm{ev}^{-1}(1)$. But since both sets have $2^a$ elements, we get $\mathrm{Im} (\varphi_a) = \mathrm{ev}^{-1}(1)$ as required.
\end{proof}

We can now finally prove Theorem \ref{theorem:formula-for-dimension}.
\begin{proof}[Proof of Theorem \ref{theorem:formula-for-dimension}]
    We will prove the theorem by descending induction on $d_3$ with the base case being $d_3 = q$. Denote:
    \[ f(d_1,d_2,d_3) = d_1d_2 + d_2d_3 + d_1d_3 -q(d_1+d_2+d_3) +q^2. \]
    When $d_3=q$, Corollary \ref{corollary:han-result-power-of-2} gives:
    \[ \dim_{\mathbb{F}_2} \frac{\mathbb{F}_2[x,y]}{(x^{d_1},y^{d_2},(x+y)^{q})} = d_1 d_2, \] 
    while
    \begin{align*}
        f(d_1,d_2,q) &= d_1d_2 + d_2q + qd_1 -q(d_1+d_2+q) +q^2\\
        &= d_1d_2,
    \end{align*}
    which proves the base case of the induction.
    
    \noindent Now, fix an $N \in \mathbb{N}$ with $\frac{q}{2}<N < q$ and assume by induction that the result is true for all $d_3$ with $N < d_3 \leq q$. Let us prove the result for $d_3=N$. With notation as in Lemma \ref{lemma:dimension-of-successive-quotients}, we have:
    \[ \dim_{\mathbb{F}_2} \frac{\mathbb{F}_2[x,y]}{J_{N+1}} = d_1d_2 + d_2(N+1) + (N+1)d_1 -q(d_1+d_2+N+1) +q^2.\]
    We have the exact sequence:
    \[ 0 \to \frac{J_N}{J_{N+1}} \to \frac{\mathbb{F}_2[x,y]}{J_{N+1}} \to \frac{\mathbb{F}_2[x,y]}{J_{N}} \to 0.  \]
    By Lemma \ref{lemma:dimension-of-successive-quotients}, we have that:
        \[ \dim_{\mathbb{F}_2} \frac{J_N}{J_{N+1}} = d_1 + d_2 - q. \]
    Putting all of these together, we get:
    \begin{align*}
        \dim_{\mathbb{F}_2} \frac{\mathbb{F}_2[x,y]}{J_{N}} = &\dim_{\mathbb{F}_2} \frac{\mathbb{F}_2[x,y]}{J_{N+1}} - \dim_{\mathbb{F}_2} \frac{J_N}{J_{N+1}}\\
        = &\left(d_1d_2 + d_2(N+1) + (N+1)d_1 -q(d_1+d_2+N+1) +q^2 \right)\\
        &- (d_1 + d_2 - q)\\
        = &d_1d_2 + d_2N + Nd_1 -q(d_1+d_2+N) +q^2\\
        = &f(d_1,d_2,N).
    \end{align*}
    This finishes the proof.
\end{proof}

\section{Algorithm for calculating $\dim \mathbb{F}_2[x, y, z]/(x^{d_1}, y^{d_2}, z^{d_3}, x + y + z)$}\label{section:algorithm}
%We have shown two conjectures regarding special cases of $d_1, d_2, d_3$. 
%The conjectures cover the last missing case regarding Han's reductions.
We now give an algorithm to calculate the dimension of $\mathbb{F}_2[x, y, z]/(x^{d_1}, y^{d_2}, z^{d_3}, x + y + z)$.

Let $d_1,d_2,d_3$ be positive integers. If there exists a $q=2^e$ such that $\frac{q}{2} \leq d_1,d_2,d_3 \leq q$ or $d_1,d_2 \leq q \leq d_3$, then $\dim_{\mathbb{F}_2}(d_1,d_2,d_3))$ is given by Theorem \ref{theorem:formula-for-dimension} or Corollary \ref{corollary:han-result-power-of-2}, respectively. Otherwise, it must be that there is some $q=2^e$ such that $\frac{q}{2} < d_1 \leq q \leq d_2,d_3$. Then Corollary \ref{cor:han-reduction} allows us to reduce to the case of $d_1, d_2-q$ and $d_3-q$. If $d_2-q, d_3-q \leq \frac{q}{2}$, we are again in the case of Corollary \ref{corollary:han-result-power-of-2}; if $ \frac{q}{2} < d_1, d_2-q, d_3-q \leq q$, then we are again in the case of Theorem \ref{theorem:formula-for-dimension}. Else we apply Corollary \ref{cor:han-reduction} again. At every stage we either have a formula for $\mathbb{F}_2[x, y, z]/(x^{d_1}, y^{d_2}, z^{d_3}, x + y + z)$ or a way to decrease two of the exponents. This process must eventually terminate, since $d_1,d_2,d_3$ are positive integers. 

\begin{algorithm}[ht!]
\caption{} \label{algo}
\begin{algorithmic}[1]
    \Require{$d_1, d_2, d_3 \in \Z_{\geq 0}$
    }
    \Ensure{$\dim_{\mathbb{F}_2}[x, y, z]/(x^{d_1}, y^{d_2}, (x+y)^{d_3})$}

    \State $\tt{sum} \leftarrow 0$
    
    \item[]
    \State $i \leftarrow \max 
    \{
        \lfloor \log_2(d_1) \rfloor,
        \lfloor \log_2(d_2) \rfloor,
        \lfloor \log_2(d_3) \rfloor
    \}$ 
    \While {$i \geq 0$}
        \State reorder $d_1, d_2, d_3$ in non-decreasing order
        \State $q \leftarrow 2^i$
        \If {$d_1, d_2, d_3 < q$}
            \State do nothing
        \ElsIf {$d_1, d_2 < q$ and $q \leq d_3$}
            \Comment {Corollary \ref{corollary:han-result-power-of-2}}
            \State $\tt{sum} \leftarrow \tt{sum} + d_1 d_2$
            
            \State \textbf{break}
        \ElsIf {$d_1 < q$ and $q \leq d_2, d_3$}
            \Comment {Corollary \ref{cor:han-reduction}}
            \State $\tt{sum} \leftarrow \tt{sum} + d_1 q$
        \ElsIf {$q \leq d_1, d_2, d_3$}
            \Comment {Theorem \ref{theorem:formula-for-dimension}}
        	\State $ \tt{sum} \leftarrow \tt{sum} + d_1 d_2 + d_1 d_3 + d_2 d_3 - (d_1 + d_2 + d_3)q + q^2$
            \State \textbf{break}
        \EndIf

        \State $i \leftarrow i-1$
    \EndWhile

    \item[]
    \State \textbf{return} \tt{sum}
\end{algorithmic}
\end{algorithm}

\pagebreak

\printbibliography

\end{document}